\newcommand{\Ab}{\mathbf A}
\newcommand{\Fb}{\mathbf F}
\newcommand{\R}{\mathbb R}
\newcommand{\C}{\mathbb C}
\newcommand{\E}{\mathrm{E}_{\rm gs}(\kappa, H)}
\DeclareMathOperator{\curl}{curl}\DeclareMathOperator{\Div}{div}
\newtheorem{thm}{Theorem}[section]
\newtheorem{prop}[thm]{Proposition}
\newtheorem{lem}[thm]{Lemma}
\theoremstyle{remark}
\newtheorem{rem}[thm]{Remark}
\newtheorem{defn}[thm]{Definition}
\numberwithin{equation}{section}
\title[Ginzburg-Landau density]
{The density of superconductivity in the bulk regime}
\author{Bernard Helffer}
\author{Ayman Kachmar}
\address[B. Helffer]{Laboratoire Jean Leray, Universit\'e de Nantes, 2 rue de la Houssini\`ere, 44322 Nantes (France) and Laboratoire de Math\'ematiques, Univ. Paris-Sud. }
\email{bernard.helffer@univ-nantes.fr}
\address[A. Kachmar]{Department of Mathematics, Lebanese University, Nabatieh, Lebanon.}
\email{ayman.kashmar@gmail.com}
\date{\today}
\begin{document}

\begin{abstract}
In the asymptotic limit of a large Ginzburg-Landau parameter, we give a new asymptotic formula for the $L^2$-norm of the Ginzburg-Landau order parameter. The formula is  valid in the bulk regime where the intensity of the applied magnetic field is  of the same order as the Ginzburg-Landau  parameter and strictly below the second critical field. Our formula complements the celebrated  one of Sandier-Serfaty for the $L^4$-norm. 
\end{abstract}

\maketitle 

\section{Introduction and main results}\label{hc2-sec:int}

\subsection*{The Ginzburg-Landau model}

The Ginzburg-Landau functional  is defined
as the sum of two functionals, the  {\it energy of the order parameter} and the {\it  magnetic energy}. It reads  as follows,
\begin{equation}\label{eq-3D-GLf}
\mathcal E_{\rm GL}(\psi,\Ab)=\mathcal E_{\rm op}(\psi,\Ab)+\mathcal E_{\rm mag}(\Ab)\,,
\end{equation}
where
\begin{equation}\label{eq:op-mag}
\begin{aligned}
\mathcal E_{\rm op}(\psi,\Ab)&=
\int_\Omega \left(|(\nabla-i\kappa H\Ab)\psi|^2-\kappa^2|\psi|^2+\frac{\kappa^2}2|\psi|^4\right)\,dx\,,\\
\mathcal E_{\rm mag}(\Ab)&=\kappa^2H^2\int_\Omega|\curl\Ab-1|^2\,dx\,.
\end{aligned}
\end{equation}
Here:
\begin{itemize}
\item $\Omega\subset\R^2$ is an open, bounded and simply connected
set with a $C^\infty$  boundary\,; $\Omega$ is the cross section of a
cylindrical superconducting sample placed vertically.
\item $(\psi,\Ab)\in H^1(\Omega;\mathbb C)\times H^1(\Omega;\mathbb
R^2)$ describes the state of superconductivity as follows:
$|\psi|^2$ measures the local density of the superconducting Cooper
pairs and $\curl\Ab$ measures the induced magnetic field in the
sample.
\item $\kappa>0$ is the Ginzburg-Landau parameter, a material characteristic
of the sample.
\item $H>0$ measures the intensity of the applied magnetic
field.
\item  The
applied magnetic field is $\kappa H \vec{e}$, where $\vec{
e}=(0,0,1)$.
\end{itemize}
We introduce the ground state energy of the functional in
\eqref{eq-3D-GLf}:
\begin{equation}\label{eq-gse}
\E=\inf\{\mathcal E_{\rm GL}(\psi,\Ab)~:~(\psi,\Ab)\in H^1(\Omega;\mathbb
C)\times H^1(\Omega;\mathbb R^2)\}\,.
\end{equation}
For a given $(\kappa,H)$, a configuration $(\psi,\Ab) \in H^1(\Omega;\mathbb
C)\times H^1(\Omega;\mathbb R^2)$ satisfying \break  $\mathcal E_{\rm GL}(\psi,\Ab)=\E$  is called  a minimizer of the functional $\mathcal E_{\rm GL}$ and we will denote it by $(\psi,\Ab)_{\kappa,H}$ to emphasize its dependence on $\kappa$ and $H$. Such a minimizer is a solution of the following Ginzburg-Landau equations (we use the notation $\nabla^\bot=(\partial_{x_2},-\partial_{x_1})$)
\begin{equation}\label{eq:GL}
\left\{
\begin{array}{rll}
-\big(\nabla-i\kappa H\Ab\big)^2\psi&=\kappa^2(1-|\psi|^2)\psi &{\rm in}\ \Omega\,,\\
-\nabla^{\perp}  \curl\Ab&= (\kappa H)^{-1}{\rm Im}\big(\overline{\psi}\,(\nabla-i\kappa H {\bf A})\psi\big) & {\rm in}\ \Omega\,,\\
\nu\cdot(\nabla-i\kappa H\Ab)\psi&=0 & {\rm on}\ \partial \Omega \,,\\
\curl \Ab &=B_0 & {\rm on}\ \partial \Omega \,.
\end{array}
\right.
\end{equation} 
\subsection*{Gauge invariant quantities}
The  physically relevant quantities, {\it density}, {\it induced magnetic field}, {\it energy} and {\it supercurrent} are  invariant under the Gauge transformations. More precisely, the following quantities 
\begin{align}
&|\psi|^2\,,\quad\curl\Ab\,,\quad |(\nabla-i\kappa H\Ab)\psi|^2\,,\label{eq:den-mf-en}\\
&j(\psi,\Ab)={\rm Re}\big(-i\overline{\psi}\,(\nabla-i\kappa H\Ab)\psi\big) \label{eq:sc}\,,
\end{align}
are invariant under the transformation $(\psi,\Ab)\mapsto (e^{i\chi},\Ab-\nabla\chi)$ for every given $\chi\in H^1(\Omega;\R)$. This gauge invariance insures that all the quantities in \eqref{eq:den-mf-en} and \eqref{eq:sc} are smooth functions (cf. \cite[Ch.~2]{SS-b})  when $(\psi,\Ab)$ is a minimizer.  The solution $(\psi,\Ab)$ of \eqref{eq:GL}  in the class such that  ${\rm div} \Ab =0$  in $\Omega$ and $\Ab \cdot \nu =0$ on $\partial \Omega$ is indeed $C^\infty$. So, without loss of generality, when working with a  solution $(\psi,\Ab)$  of \eqref{eq:GL}, will assume that  it is $C^\infty$.

\subsection*{Earlier results on the density}

In this paper, we will study the asymptotics for the density  in the following regime
\begin{equation}\label{eq:regime}
H=b\kappa\,,
\end{equation}
where $ b\in(0,1)$ is a {\bf fixed} constant. 

This corresponds to the situation of an external magnetic field with intensity strictly below   the second critical field $H_{c_2} (\kappa):=\kappa\,$.  The case where $b>1$ in \eqref{eq:regime} is related to the phenomenon of surface superconductivity which is extensively studied by many authors \cite{Al-s, CR, FH-b,   P-cmp}.

When \eqref{eq:regime} holds, Sandier-Serfaty \cite{SS02} proved the following formula for the ground state energy in \eqref{eq-gse}:
\begin{equation}\label{eq:en}
\E= g(b)|\Omega|\kappa^2+o(\kappa^2)\quad{\rm as}~\kappa\to +\infty\,,
\end{equation}
where $g(b)$ is an implicitly defined quantity that depends only on $b$. Its precise definition will be  given in \eqref{eq:g}. In particular, it satisfies:
$$g(0)=-\frac12\,, \, g(1)=0 \mbox{  and } g(b)<0 \mbox{  for } b\in(0,1)\,.
$$

The convergence in \eqref{eq:en} is uniform with respect to $b$ on every interval $[\epsilon,1)$, $\epsilon>0$. The uniform convergence fails on the interval $(0,1)\,$.  More details regarding the uniformity with respect to $b$ are given by K. Attar  in \cite{Att, Att2}.

Now suppose that \eqref{eq:regime} holds and that $(\psi,\Ab)_{\kappa,H}$ is a minimizer of the functional in \eqref{eq-3D-GLf}. The magnetic energy satisfies \cite{Att}:
\begin{equation}\label{eq:m-e}
\kappa^2H^2\int_\Omega|\curl\Ab-1|^2\,dx\leq C\, \kappa^{7/4}\,,
\end{equation}
for $\kappa\geq \kappa_0$, where $\kappa_0$ and $C$ are two constants  that depend only on the domain $\Omega$ and the constant $b$ in \eqref{eq:regime}.  Hence its contribution in the  ground state energy  is relatively small as $\kappa \rightarrow +\infty\,$.

Again, if $b\in[\epsilon,1)$ for some $\epsilon>0\,$, the constants $\kappa_0$ and $C$ can be selected independently from $b$, but they will depend on $\epsilon\,$. More details can be found in \cite{Att, Att2}, where it is allowed for $\epsilon$ to depend on $\kappa$, $\epsilon =\epsilon (\kappa)$, and approach $0$ as $\kappa\to+\infty\,$. 

Using the Ginzburg-Landau equation for $\psi$ (see \eqref{eq:GL}), we get the following simple relation between the energy and the $L^2$-norm of the density:
\begin{equation}\label{eq:L4=en}
\mathcal E_{\rm op}(\psi,\Ab)=-\frac{\kappa^2}2\int_\Omega|\psi (x) |^4\,dx\,,
\end{equation}
where $\mathcal E_{\rm op}$ is the energy of the order parameter introduced in \eqref{eq:op-mag}. Consequently, combining the estimates in \eqref{eq:en} and \eqref{eq:m-e}, we deduce the following formula regarding the  $L^2$-norm of the density \cite{SS02}:
\begin{equation}\label{eq:L4}
\int_\Omega|\psi (x) |^4\,dx=-2g(b)|\Omega|+o(1)\quad{\rm as~}\kappa\to + \infty\,,
\end{equation}
where the function $o(1)$ is dominated by a function $s(\kappa)$ such that $s(\kappa)$ is independent of the choice of the minimizer $(\psi,\Ab)_{\kappa,H}$ and $s(\kappa)\to 0$ as $\kappa\to + \infty\,$.  When $b\in[\epsilon,1)$ for some $\epsilon>0\,$, the function $s(\kappa)$ can be selected independently from $b$.  More details can be found in \cite{Att, Att2}, where the case $\epsilon =\epsilon (\kappa)$ tending to $0$ is considered. In particular the comparison of $\epsilon (\kappa)$ with the first critical field $H_{c_1}(\kappa)\approx \frac{ \ln\kappa}\kappa$ could play a role. 

Furthermore, Sandier-Serfaty obtained  the following weak-convergence of $|\psi|^4$ as $\kappa\to +\infty$ in the sense of distributions \cite{SS02}:
\begin{equation}\label{eq:dist-L4}
|\psi|^4\rightharpoonup -2g(b)\quad{\rm in~}\mathcal D'(\Omega)\,.
\end{equation}
\subsection*{Open questions}
 
Note that for $b=0$ in \eqref{eq:regime}, i.e. $H=0$, every minimizer $(\psi,\Ab)_{\kappa,H}$ satisfies $|\psi|=1$ and $\curl\Ab=1$. This is consistent with \eqref{eq:L4} and \eqref{eq:m-e}. Indeed, as $b\to0_+$, we know that $g(b)\to-\frac12$. 

The regime $b\to0_+$ (which corresponds to $H\ll\kappa$, see \eqref{eq:regime}) is thoroughly analyzed by Sandier-Serfaty in \cite{SS-rmp, SS-cmp}. In particular, it is proved that, for any minimizer $(\psi,\Ab)_{\kappa,H}$,   the density $|\psi|^2$ satisfies $|\psi|^2\to 1$ in $L^2(\Omega)$  and it  is close to $1$ everywhere except in narrow regions of area $O(\kappa^{-1})$. The region where $|\psi|^2$ is not close to $1$ consists of small defects accommodating isolated zeros of $\psi$, called {\it vortices}. These vortices are evenly distributed in the domain $\Omega$ along a lattice, and the distance between two vortices is $\approx H^{-1}$,  much larger than $\kappa^{-1}$, the core size of the vortex.

The detailed analysis of the distribution of  vortices 
is missing when \eqref{eq:regime} holds for a fixed constant $b\in(0,1)$, even for small values of $b$. This is a challenging problem mainly for the following reason. For a minimizer $(\psi,\Ab)_{\kappa,H}$, it is expected that $\psi$ will have isolated zeros/vortices filling up all the domain $\Omega$, but these zeros are separated by a distance $O(H^{-1})=O(\kappa^{-1})$. At the same time, the core-size of every vortex is equal to $O(\kappa^{-1})$. Consequently, detecting the vortices in this regime becomes harder than  when $H\ll\kappa$ (i.e. $b\to0_+$ in \eqref{eq:regime}).

This problem is related to the one of the Abrikosov state near the critical field $H_{C_2}:=\kappa\,$, where the transition to the normal state in the bulk occurs. This is visualized in  the regime $b\to1_-$ in \eqref{eq:regime} and is analyzed in many papers, \cite{Al, FK-am, Ka-SIMA, KN}. The same difficulty is encountered when trying to detect the vortices by the methods of Sandier-Serfaty, so that the analysis  is shifted to the distribution of the density $|\psi|^2$ instead.

In this paper, we  complement the results of Sandier-Serfaty by obtaining analogues of the formulas in \eqref{eq:L4} and \eqref{eq:dist-L4} for the density $|\psi|^2$ (instead of the square of the density, $|\psi|^4$), in the regime where \eqref{eq:regime} holds for a fixed constant $b\in(0,1)$. Besides that such results are new and do not follow from the analysis by Sandier-Serfaty \cite{SS02}, they  might be helpful in the analysis of the vortices. Related to these results is the asymptotics of the supercurrent
$j(\psi,\Ab)$ when \eqref{eq:regime} holds. Even in the particular regime $H\ll \kappa$ (i.e. $ b \ll 1$ in \eqref{eq:regime}),  the analysis of the distribution of the super-current is missing. Actually,   Sandier-Serfaty \cite[Ch.~8, Corol.~8.1]{SS-b} prove only that, in the regime $\frac{|\ln\kappa|}\kappa\ll H \ll \kappa$,
$\curl j\to 0$ ${\rm in~}\mathcal D'(\Omega)$,
 as $\kappa\to +\infty$.



\subsection*{Main results}

To state our main results, we recall  some  properties of $g$. The function $g$ is increasing and  concave (cf. \cite[Thm.~2.1]{FK-cpde}).  Consequently, $g$ has at each point left- and right-sided derivatives $g'(b_-)$ and $g'(b_+)$ with
$$
g'(b_+) \leq g'(b_-) \,.
$$ 
Therefore, we can  introduce the set  
\begin{equation}\label{eq:reg}
\mathcal R=\{b\in(0,1)~:~g'(b_-)=g'(b_+)\}
\end{equation}
whose complement  in the interval $(0,1)$ is countable.
%
Assuming that $b\in\mathcal R$  and \eqref{eq:regime} holds, we will prove that every minimizer $(\psi,\Ab)_{\kappa,H}$ of the G-L functional in \eqref{eq-3D-GLf} satisfies (compare with \eqref{eq:L4})
\begin{equation}\label{eq:L2}
\int_\Omega|\psi (x)|^2\,dx=\Big(bg'(b)-2g(b)\Big)|\Omega|+o(1)\quad{\rm as~}\kappa\to +\infty\,.
\end{equation}
The formula in \eqref{eq:L2} is consistent with the one given in \cite[Eq.~(1.6)]{Ka-SIMA}  which is valid as $b\to1_-$.  We have indeed (see below  \eqref{eq:Ab}), 
 $$g(b)\sim E_{\rm Ab}(b-1)^2 \,,$$
  where $E_{\rm Ab}\in[-\frac12,0)$ is a universal constant.
  
  
More precisely, our main result is:
\begin{thm}\label{thm:HK}

Let $b\in(0,1)$. There exist $\kappa_0>0$ and  a function $\lambda:\R_+\to\R_+$ such that $\displaystyle\lim_{\kappa\to\infty}\lambda(\kappa)=0$ and the following is true.

If $(\psi,\Ab)_{\kappa, H}$ is a minimizer of the functional in \eqref{eq-3D-GLf} for $H=b\kappa$ and $\kappa\geq \kappa_0$, then
\begin{enumerate}
\item 
$$bg'(b_+)-\lambda(\kappa)\leq \frac1{\kappa^2|\Omega|}\int_\Omega|(\nabla-i\kappa H)\Ab)|^2\,dx\leq bg'(b_-)+\lambda(\kappa)\,.$$
\item 
$$bg'(b_+)-2g(b)-\lambda(\kappa)\leq \frac1{|\Omega|}\int_\Omega |\psi(x)|^2\,dx\leq bg'(b_-)-2g(b)+\lambda(\kappa)\quad{\rm as~}\kappa\to +\infty\,.$$
\item If $b\in\mathcal R$, then as $\kappa\to\infty$, the following convergence holds in the sense of distributions
$$|\psi|^2\rightharpoonup bg'(b)-2g(b)\quad{\rm in }~\mathcal D'(\Omega)\,.$$
\item The supercurrent satisfies
$$\frac1{\kappa^2|\Omega|}\int_\Omega |j(\psi,\Ab)|^2\,dx\leq bg'(b_-)+\lambda(\kappa)\,,$$
and
$$\frac1{\kappa|\Omega|}\int_\Omega |j(\psi,\Ab)|\,dx\leq \sqrt{ bg'(b_-)\big(bg'(b_-)-2g(b)\big)}+\lambda(\kappa)\,.$$
\end{enumerate}
\end{thm}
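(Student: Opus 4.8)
The plan is to obtain the formula for $\int_\Omega|\psi|^2\,dx$ by differentiating the energy identity \eqref{eq:en} with respect to the parameter $b$, using the concavity/monotonicity of $g$ to control the one-sided derivatives, and then to deduce the statements on the magnetic energy, the distributional convergence, and the supercurrent as corollaries. The key mechanism is a Feynman--Hellmann type argument at the level of the Ginzburg--Landau functional: if we fix a minimizer $(\psi,\Ab)_{\kappa,H}$ for $H=b\kappa$ and perturb $b$ to $b'=b\pm t$ (equivalently $H$ to $H'=b'\kappa$), the minimizer remains an admissible test configuration (after an appropriate gauge/rescaling adjustment of $\Ab$), so that $\mathrm E_{\rm gs}(\kappa,b'\kappa)\le\mathcal E_{\rm GL}^{(b')}(\psi,\Ab)$. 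Computing $\frac{d}{db'}\mathcal E_{\rm GL}^{(b')}(\psi,\Ab)$ explicitly, the derivative picks out exactly $-\frac{2}{b}\mathcal E_{\rm op}$-type cross terms together with the magnetic-field term, and after inserting the identity \eqref{eq:L4=en} and the magnetic energy bound \eqref{eq:m-e}, one isolates $\frac{1}{|\Omega|}\int_\Omega|\psi|^2\,dx$ up to $o(1)$ errors controlled by the difference quotients of $g$. Taking $t\to 0$ (with $t$ going to zero slower than the error terms in \eqref{eq:en}, e.g. $t=t(\kappa)\to 0$ suitably) yields the sandwich between $g'(b_+)-2g(b)$ and $g'(b_-)-2g(b)$, which is item (2).

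More precisely, for item (1) I would first establish the analogous difference-quotient bounds for the magnetic kinetic energy $\frac{1}{\kappa^2|\Omega|}\int_\Omega|(\nabla-i\kappa H\Ab)\psi|^2\,dx$: expanding $\mathcal E_{\rm op}=\int_\Omega|(\nabla-i\kappa H\Ab)\psi|^2 - \kappa^2|\psi|^2+\frac{\kappa^2}{2}|\psi|^4$ and combining with \eqref{eq:en}, \eqref{eq:m-e}, \eqref{eq:L4=en}, \eqref{eq:L4}, and item (2), one solves for the kinetic term. The point is that the three quantities --- kinetic energy, $\int|\psi|^2$, and $\int|\psi|^4$ --- are linked by two linear relations (the definition of $\mathcal E_{\rm op}$ and the equation \eqref{eq:L4=en}), so knowing the asymptotics of $\int|\psi|^4$ from \eqref{eq:L4} plus the new asymptotics of $\int|\psi|^2$ from item (2) pins down the kinetic energy to be $g'(b)\kappa^2|\Omega|+o(\kappa^2)$, which after restoring the one-sided bounds gives item (1).

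For item (3), the distributional convergence, I would localize: multiply the GL equation (or the energy identity) by a test function $\varphi\in C_c^\infty(\Omega)$ and run the same perturbation argument on the \emph{localized} energy, using that the upper and lower bounds from \eqref{eq:en} hold with the same $g$ on subdomains up to boundary corrections of lower order (as in the ergodic/subadditivity arguments behind the Sandier--Serfaty formula, cf.\ \cite{SS02, Att}); when $b\in\mathcal R$ the two one-sided derivatives coincide, so the $\liminf$ and $\limsup$ of $\int_\Omega|\psi|^2\varphi\,dx$ both equal $(g'(b)-2g(b))\int_\Omega\varphi\,dx$. For item (4), I would use the pointwise identity $|j(\psi,\Ab)|\le |\psi|\,|(\nabla-i\kappa H\Ab)\psi|$: the $L^2$ bound on $j$ is immediate from item (1) since $|\psi|\le 1$ (the standard maximum-principle bound for GL minimizers), and the $L^1$ bound follows from Cauchy--Schwarz, $\int_\Omega|j|\le \big(\int_\Omega|\psi|^2\big)^{1/2}\big(\int_\Omega|(\nabla-i\kappa H\Ab)\psi|^2\big)^{1/2}$, inserting the upper bounds from items (1) and (2). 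The main obstacle is the perturbation step itself: one must check that changing $b$ genuinely gives an admissible competitor with a controlled energy change --- the subtlety is the magnetic term $\kappa^2H^2\int|\curl\Ab-1|^2$, whose prefactor $H^2=b^2\kappa^2$ changes with $b$, and the boundary condition $\curl\Ab=B_0$; handling these cleanly (and making the difference quotient error uniform, so that the limit $t\to0$ can be taken against the $o(1)$ in \eqref{eq:en}) is where the real work lies, and it is presumably where the hypothesis $b\in\mathcal R$ and the explicit constants $\kappa_0$, $\lambda(\kappa)$ get their precise form.
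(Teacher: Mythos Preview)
Your core idea---a Feynman--Hellmann type argument, perturbing $b$ and comparing energies to extract difference quotients of $g$---is precisely the mechanism the paper uses. Items (3) and (4) in your sketch are also handled essentially as you describe (localizing for (3); the pointwise bound $|j|\le|\psi|\,|(\nabla-i\kappa H\Ab)\psi|$ plus Cauchy--Schwarz for (4)). So the architecture is right.

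The gap is in where you run the perturbation. You propose to perturb $b$ directly in the full functional $\mathcal E_{\rm GL}^{(b')}(\psi,\Ab)$ and claim that ``the derivative picks out exactly $-\tfrac{2}{b}\mathcal E_{\rm op}$-type cross terms'' from which $\int_\Omega|\psi|^2$ can be read off. But in the full functional $b$ enters only through $H=b\kappa$ inside the covariant derivative $\nabla-i b\kappa^2\Ab$ and in the prefactor $b^2\kappa^4$ of the magnetic energy. Differentiating in $b$ therefore produces the term
\[
2\kappa^2\,\mathrm{Re}\!\int_\Omega\bigl\langle(\nabla-i\kappa H\Ab)\psi,\,-i\Ab\psi\bigr\rangle\,dx
\;=\;-2\kappa^2\!\int_\Omega \Ab\cdot j(\psi,\Ab)\,dx
\]
together with the change in the magnetic term. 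Neither of these is $\int|\psi|^2$ or the kinetic energy, and the quantity $\int\Ab\cdot j$ is not even gauge-invariant. Rescaling $\Ab\mapsto(b/b')\Ab$ to freeze the covariant derivative does not help either: it forces the magnetic energy to pick up a contribution $(b-b')^2\kappa^4|\Omega|$, which is of order $t^2\kappa^4$ and swamps the signal unless $t\ll\kappa^{-1}$, too small to resolve $g'(b_\pm)$. So the Feynman--Hellmann step, as you have written it, does not isolate the quantity you want.

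The paper repairs this by first \emph{localizing and rescaling} to the bulk reduced functional
\[
F_{b,Q_R}(u)=\int_{Q_R}\Bigl(b\,|(\nabla-i\Ab_0)u|^2-|u|^2+\tfrac12|u|^4\Bigr)\,dy\,,
\]
in which $b$ appears as a clean multiplicative coefficient of the kinetic term. Proposition~\ref{prop:Ka-SIMA} shows that, on any interior square $Q_\ell(x_0)$, the local GL energy of the minimizer (after gauge) agrees with $F_{b,Q_R}(\tilde f)$ for $R=\ell\sqrt{\kappa H}$, with explicit remainders. Then the perturbation is trivial:
\[
\epsilon\int_{Q_R}|(\nabla-i\Ab_0)\tilde f|^2\,dy
=F_{b+\epsilon,Q_R}(\tilde f)-F_{b,Q_R}(\tilde f)
\ge\bigl(g(b+\epsilon)-g(b)\bigr)|Q_R|-\text{(error)}\,,
\]
using that $\tilde f$ is admissible for the Neumann problem $e_N(b+\epsilon,R)$ and the bound \eqref{eq:g'}. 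Dividing by $\epsilon$ and sending $\epsilon\to0$ with $\epsilon=\max(R^{-1/2},\ell^{1/2})$ yields the sandwich for the kinetic energy (Lemma~\ref{lem:mag-grad}); combining with the local $L^4$ formula from Proposition~\ref{prop:Ka-SIMA} then gives the $L^2$ bounds (Lemma~\ref{lem:L2} and Theorem~\ref{thm:HK*}). Statements (2), (3), (1), (4) of Theorem~\ref{thm:HK} are deduced from this local result in the order you indicate. In short: your strategy is correct, but the perturbation must be carried out on the rescaled bulk problem, not on the original GL functional; the reduction to $F_{b,Q_R}$ is the missing ingredient.
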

\begin{rem}\label{rem:LOT}{\bf[On the leading order term]}~

The coefficient of the leading term in \eqref{eq:L2} does not vanish. Actually, $g'(b)\geq 0$ since $g$ is increasing, and  $g(b)<0$ for $b\in(0,1)$. 
\end{rem}

\begin{rem}\label{rem:bto0}{\bf[On the $L^2$-norm of $1-|\psi|^2$]}~

Using \eqref{eq:L4} and H\"older's inequality, we get, for fixed $b$ and as $\kappa \rightarrow +\infty$, 
$$
 \frac1{|\Omega|}\int_\Omega |\psi(x)|^2\,dx \leq  |\Omega|^{-\frac 12 } \left( \int_\Omega |\psi(x)|^4\,dx\right)^\frac 12 \leq  (-2 g(b))^{\frac 12}  + o(1)\,.
 $$
 Combined with the lower bound in \eqref{eq:L2}, we get  (we use that $g'(b_+)\geq 0$)
 $$-2g(b)-o(1)\leq \frac1{|\Omega|}\int_\Omega |\psi(x) |^2\,dx\leq (-2 g(b))^{\frac 12}  + o(1)\,.$$
 Now we find the following estimate for the $L^2$-norm of $1-|\psi|^2$,
 $$\frac1{|\Omega|}\int_\Omega(1-|\psi(x)|^2)^2\,dx\leq  1+2g(b)+o(1)\,,$$
 with the principal term on the right hand side approaching $0$ as $b\to0_+$, since $$\lim_{b\to0_+}g(b)=-\frac12\,.$$
 This is consistent with the behavior $|\psi|^2\to 1$ in $L^2(\Omega)$ obtained in \cite{SS-rmp}.
\end{rem}

\begin{rem}\label{rem:pe}{\bf[On the potential energy]}

When $b\in\mathcal R$ (see \eqref{eq:reg}), we get from Theorem~\ref{thm:HK} that the potential energy satisfies
$$
\kappa ^2 \int_\Omega\left(-|\psi(x)|^2+\frac{1}2|\psi(x)|^4\right)\,dx= \kappa^2 \Big(g(b)-bg'(b)\Big)|\Omega|(1+o(1))\,.$$
\end{rem}

\section{Preliminaries}

\subsection{The bulk energy}

Here we give the definition of the reference bulk energy $g(\cdot)$. This energy first appeared in \cite{SS02} and was then extensively studied in \cite{AfSe, FK-cpde, Att2, Att3, Ka-nf}. 

Consider $b\in\,(0,+\infty)$, $r>0\,$ and
$Q_r=\,(-r/2,r/2)\,\times\,(-r/2,r/2)$\,. Define the functional,
\begin{equation}\label{eq:rGL}
F_{b,Q_r}(u)=\int_{Q_r}\left(b\, |(\nabla-i\Ab_0)u|^2-|u|^2+\frac12|u|^4\right)\,dx\,,
\quad \mbox{ for } u\in H^1(Q_r)\,.
\end{equation}
Here, $\Ab_0$ is the magnetic potential,
\begin{equation}\label{eq:A0}
\Ab_0(x)=\frac12(-x_2,x_1)\,,\quad \mbox{ for } x=(x_1,x_2)\in \R^2\,.
\end{equation}
Define the two   ground state energies,
\begin{equation}\label{eq:eD}
\begin{aligned}
&e_D(b,r)=\inf\{F_{b,Q_r}(u)~:~u\in H^1_0(Q_r)\}\,,\\
&e_N(b,r)=\inf\{F_{b,Q_r}(u)~:~u\in H^1(Q_r)\}\,.
\end{aligned}
\end{equation}
The function $g(\cdot)$ may be defined as
follows (cf. \cite{FK-cpde, SS02, Att2}), 
\begin{equation}\label{eq:g}
\forall~b>0\,,\quad g(b)=\lim_{r\to +\infty}\frac{e_D(b,r)}{|Q_r|}=\lim_{r\to\infty}\frac{e_N(b,r)}{|Q_r|}\,,
\end{equation}
where  $|Q_r|$ denotes the area of $Q_r$ ($|Q_r|=r^2$). Furthermore, there exists a  constant $C$ such that, for all
$r\geq 1$ and  $b\in(0,1)$,
\begin{equation}\label{eq:g'}
  g(b)\leq\frac{e_D(b,r)}{|Q_r|}\leq g(b)+C\frac{\sqrt{b}}{r}\quad {\rm and}\quad 
  e_D(b,R)-Cr\sqrt{b}\leq e_N(b,r)\leq e_D(b,r)\,.
\end{equation}
Various properties satisfied by the function $g(\cdot)$ are established in \cite{Att3, FK-cpde,  Ka-JFA, SS02}. In particular,  
the
function $g(\cdot)$ is a monotone non decreasing continuous and locally Lipschitz function such
that
\begin{equation} \label{propg}
g(0)=-\frac12 \mbox{ and } g(b)=0 \mbox{ when } b\geq 1\,,
\end{equation}
and 
\begin{equation}\label{eq:Ab}
\lim_{b\to1_-}\frac{g(b)}{(b-1)^2}=E_{\rm Ab}\in [-\frac12,0)\,.
\end{equation}
\subsection{A priori estimates and Gauge tranformations}

Here we collect useful estimates regarding the critical points of the Ginzburg-Landau functional 
(cf. \cite[Prop.~10.3.1~and~11.4.4]{FH-b}).

\begin{prop}\label{prop:FH-b}
Let $b\in(0,1)$.  There exist two constants
$C>0$ and $\kappa_0>0$ such that, if $\kappa\geq \kappa_0$, $H=b\kappa$ and $(\psi,\Ab)_{\kappa,H}$ is a critical point of \eqref{eq-gse}, then:
\begin{align}
&\|\psi\|_\infty\leq1\,,\label{eq:psi<1}\\
&\|(\nabla-i\kappa H\Ab)\psi\|_{C(\overline\Omega)}\leq C\kappa\,,\label{eq:grad-psi}\\
& \|\curl\Ab-1\|_{C^1(\overline\Omega)}\leq \frac{C}{\kappa}\,.\label{eq:curl=cst}
\end{align}
\end{prop}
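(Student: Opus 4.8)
The plan is to establish the three bounds in sequence, each feeding into the next, working with a solution $(\psi,\Ab)$ of \eqref{eq:GL} in the regime \eqref{eq:regime} and abbreviating the magnetic gradient by $D=\nabla-i\kappa H\Ab$. First I would prove \eqref{eq:psi<1} by a maximum principle that uses only the first equation and its Neumann condition. A direct computation gives $\tfrac12\Delta|\psi|^2=|D\psi|^2+{\rm Re}(\bar\psi\,D^2\psi)$, and inserting $D^2\psi=-\kappa^2(1-|\psi|^2)\psi$ yields
$$\tfrac12\Delta|\psi|^2=|D\psi|^2-\kappa^2\big(1-|\psi|^2\big)|\psi|^2\,.$$
On the open set $\{|\psi|^2>1\}$ the right-hand side is strictly positive, so $|\psi|^2$ is subharmonic there and has no interior maximum; since $\nabla|\psi|^2=2\,{\rm Re}(\bar\psi\,D\psi)$ and the boundary condition forces $\nu\cdot D\psi=0$, the normal derivative $\partial_\nu|\psi|^2$ vanishes on $\partial\Omega$, ruling out a boundary maximum via Hopf's lemma. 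Hence $\{|\psi|^2>1\}=\emptyset$.

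Next I would extract a crude $L^2$ bound and use it to control the field. Testing the first equation against $\psi$ (the boundary term vanishes by the Neumann condition) gives $\int_\Omega|D\psi|^2=\kappa^2\int_\Omega(1-|\psi|^2)|\psi|^2\le\kappa^2|\Omega|$, hence $\|D\psi\|_{L^2}\le\kappa|\Omega|^{1/2}$. Feeding this into the second equation $\nabla^\perp\curl\Ab=-(\kappa H)^{-1}{\rm Im}(\bar\psi\,D\psi)$ and using $|{\rm Im}(\bar\psi D\psi)|\le|\psi|\,|D\psi|$ gives $\|\nabla\curl\Ab\|_{L^2}\le(\kappa H)^{-1}\|D\psi\|_{L^2}\le C/H\le C/\kappa$; together with the boundary condition $\curl\Ab=1$ and Poincar\'e's inequality this yields $\|\curl\Ab-1\|_{H^1}\le C/\kappa$. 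Passing to the Coulomb gauge ($\Div\Ab=0$, $\Ab\cdot\nu=0$) and using elliptic regularity for the div--curl system, I would upgrade this to $\|\Ab-\Ab_{\rm ref}\|_{H^2}\le C/\kappa$ for a fixed smooth reference $\Ab_{\rm ref}$ with $\curl\Ab_{\rm ref}=1$, so that the two-dimensional embedding $H^2\hookrightarrow C^{0,\alpha}$ gives $\|\Ab-\Ab_{\rm ref}\|_{C^{0,\alpha}}\le C/\kappa$.

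The gradient bound \eqref{eq:grad-psi} I would obtain by rescaling at the coherence length $\kappa^{-1}$. Fixing $x_0\in\overline\Omega$, after a local gauge transformation removing the constant $\Ab(x_0)$ I set $\tilde\psi(y)=\psi(x_0+y/\kappa)$; then $\tilde\psi$ solves $\tilde D^2\tilde\psi=-(1-|\tilde\psi|^2)\tilde\psi$ with $|\tilde\psi|\le1$, and in the regime \eqref{eq:regime} the rescaled potential is bounded by $O(1)$ on every fixed ball $\{|y|\le R\}$ (the smooth reference contributes a term of size $O(H|y|/\kappa)=O(1)$ and the $C^{0,\alpha}$-correction a term of size $O(\kappa^{-\alpha})$). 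Interior and, near $\partial\Omega$, boundary elliptic estimates for this equation with bounded coefficients then bound $\|\tilde D\tilde\psi\|_{L^\infty(B_{R/2})}$ by a constant independent of $x_0$ and of the solution; since $\tilde D\tilde\psi=\kappa^{-1}(D\psi)(x_0+\cdot/\kappa)$, evaluating at the origin gives $|D\psi(x_0)|\le C\kappa$. Finally, with \eqref{eq:psi<1} and \eqref{eq:grad-psi} in hand, the second equation gives the pointwise bound $\|\nabla\curl\Ab\|_\infty\le(\kappa H)^{-1}\|\psi\|_\infty\|D\psi\|_\infty\le C/H\le C/\kappa$, and integrating from the boundary (where $\curl\Ab=1$) gives $\|\curl\Ab-1\|_\infty\le{\rm diam}(\Omega)\,\|\nabla\curl\Ab\|_\infty\le C/\kappa$, which together prove \eqref{eq:curl=cst}.

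The main obstacle is the rescaling step for \eqref{eq:grad-psi}. One must first decouple the pointwise gradient estimate from the field estimate, which is precisely why the weaker $H^2$ bound of the previous step is established beforehand and used only to guarantee that the rescaled potential has bounded, in fact convergent, coefficients. One must then make the elliptic regularity uniform up to $\partial\Omega$, which requires choosing a boundary-adapted local gauge, exploiting that the rescaled boundary flattens to a hyperplane as $\kappa\to\infty$, and invoking boundary Schauder or $L^p$ estimates for the magnetic Neumann problem so that the constant $C$ does not depend on the base point $x_0$ or on the particular critical point. By contrast, the maximum principle and the field estimates are comparatively routine once this uniform pointwise gradient bound is available.
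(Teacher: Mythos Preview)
The paper does not prove this proposition but simply cites \cite[Prop.~10.3.1 and 11.4.4]{FH-b}. Your outline is correct and is essentially the standard argument recorded there: the Kato identity and maximum principle for \eqref{eq:psi<1}, blow-up at scale $\kappa^{-1}$ with interior/boundary elliptic regularity for \eqref{eq:grad-psi} (using a preliminary $H^2$ control of the potential to ensure the rescaled coefficients are bounded), and the second Ginzburg--Landau equation together with \eqref{eq:psi<1}--\eqref{eq:grad-psi} for \eqref{eq:curl=cst}.
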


%

As a consequence of Proposition~\ref{prop:FH-b}, we may pick a useful gauge transformation in every ball with small radius:

\begin{prop}\label{prop:gauge}
Let $b\in(0,1)$.
There exist two constants $C>0$ and $\kappa_0>0$ such that, for any  $x_0\in\Omega$,
there exists a function $\varphi_0\in C^1(\Omega)$ such that, if $\kappa\geq \kappa_0$, $H=b\kappa$ and $(\psi,\Ab)_{\kappa,H}$ is a $C^\infty$ solution of \eqref{eq:GL}, then:
$$\forall~x\in \Omega\,,\quad
\Big|\Ab(x)-\big(\Ab_0(x-x_0)-\nabla\varphi_0(x)\big)\Big|\leq \frac{C}{\kappa} \max\Big( |x-x_0|,|x-x_0|^2\Big)\,,$$
where $\Ab_0$ is the vector field introduced in \eqref{eq:A0}.
\end{prop}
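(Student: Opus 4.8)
The plan is to produce the desired gauge $\varphi_0$ near a fixed point $x_0$ by a Poincaré-lemma type construction applied to the one-form associated with $\Ab - \Ab_0(\cdot - x_0)$, and then to use the curl estimate \eqref{eq:curl=cst} from Proposition~\ref{prop:FH-b} to control the size of the remaining term. First I would note that since $\curl \Ab_0 = 1$ on $\R^2$, the scalar field $f := \curl\Ab - \curl\big(\Ab_0(\cdot - x_0)\big) = \curl\Ab - 1$ satisfies, by \eqref{eq:curl=cst}, $\|f\|_{C^1(\overline\Omega)} \leq C/\kappa$; in particular $f(x_0) = O(1/\kappa)$ and $|\nabla f| \leq C/\kappa$, so on a ball centered at $x_0$ one has $|f(x)| \leq (C/\kappa)\,|x-x_0|$ after absorbing the constant value $f(x_0)$ into a correction (or, more simply, one keeps the crude bound $|f(x)|\le C/\kappa$, which already suffices since we allow a factor $\max(|x-x_0|,|x-x_0|^2)$ that is $O(1)$ only if we do not shrink the ball — see below).

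The key construction is as follows. Fix $x_0$ and work on $\Omega$ (or on a convex neighbourhood of $x_0$ inside $\Omega$; since $\Omega$ is simply connected with smooth boundary this is a harmless technical point handled by the $C^\infty$ regularity stated after \eqref{eq:sc}). Consider the vector field $\mathbf{G}(x) := \Ab(x) - \Ab_0(x - x_0)$, which satisfies $\curl \mathbf{G} = f$ with $\|f\|_{C^1} \le C/\kappa$. Because $\Omega$ is simply connected, there is no topological obstruction, and I can write $\mathbf{G} = \mathbf{G}_1 - \nabla\varphi_0$, where $\mathbf{G}_1$ is an explicit ``small'' vector field carrying the curl and $\varphi_0$ absorbs the curl-free part. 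A convenient choice for $\mathbf{G}_1$ is the radial-gauge primitive based at $x_0$, namely $\mathbf{G}_1(x) = \left(\int_0^1 t\, f\big(x_0 + t(x-x_0)\big)\,dt\right)\,(x - x_0)^{\perp}$ in a suitable convention, which satisfies $\curl \mathbf{G}_1 = f$ and, from $\|f\|_\infty \le C/\kappa$ and $\|\nabla f\|_\infty\le C/\kappa$, obeys $|\mathbf{G}_1(x)| \le (C/\kappa)\,|x-x_0|$ on all of $\Omega$ (the diameter of $\Omega$ being fixed). Then $\mathbf{G} - \mathbf{G}_1$ is curl-free on the simply connected set, hence equals $-\nabla\varphi_0$ for some $\varphi_0$, and $\varphi_0 \in C^1$ by the regularity of $\Ab$ and $f$. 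This yields exactly
$$
\big|\Ab(x) - \big(\Ab_0(x-x_0) - \nabla\varphi_0(x)\big)\big| = |\mathbf{G}_1(x)| \le \frac{C}{\kappa}\,|x-x_0| \le \frac{C}{\kappa}\max\big(|x-x_0|,|x-x_0|^2\big).
$$

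The main obstacle, such as it is, is bookkeeping rather than anything deep: one must make sure the radial primitive $\mathbf{G}_1$ is well defined (the segment from $x_0$ to $x$ must lie in a region where $f$ is controlled — this is why one either restricts to a convex neighbourhood of $x_0$ or extends $f = \curl\Ab - 1$ to all of $\R^2$ with the same $C^1$ bound using a standard extension operator for the smooth domain $\Omega$), and one must track that the constant $C$ and the threshold $\kappa_0$ depend only on $\Omega$ and $b$, which is immediate since every estimate used comes from Proposition~\ref{prop:FH-b} with constants of precisely that dependence and the geometric constants ($\diam\Omega$, the extension-operator norm) depend only on $\Omega$. The factor $\max(|x-x_0|,|x-x_0|^2)$ in the statement is weaker than the bound $|x-x_0|$ we actually obtain, so there is extra room; alternatively, if one prefers to work without the global extension and only near $x_0$, the quadratic term appears naturally when one Taylor-expands $f$ about $x_0$ and the linear-in-$|x-x_0|$ term comes from $f(x_0) = O(1/\kappa)$ while the genuinely quadratic piece comes from $\nabla f = O(1/\kappa)$.
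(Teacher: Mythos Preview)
Your proposal is correct and essentially the same as the paper's proof: both build a radial-gauge (Poincar\'e-lemma) primitive based at $x_0$ after extending the relevant scalar field to a convex set containing $\overline{\Omega}$, then use simple connectedness of $\Omega$ to absorb the curl-free remainder into $-\nabla\varphi_0$. The only cosmetic difference is that the paper applies the radial primitive to $B=\curl\Ab$ and then subtracts off $\Ab_0(x-x_0)$ (splitting the error into a quadratic piece from $\nabla B=O(\kappa^{-1})$ and a linear piece from $B(x_0)-1=O(\kappa^{-1})$), whereas you apply it directly to $f=\curl\Ab-1$, which collapses the two pieces into the single bound $(C/\kappa)|x-x_0|$.
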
 
\begin{proof}
Let $B=\curl\Ab$.  Choose a convex and open set $U\subset\R^2$ such that $\overline{\Omega}\subset U$. We may extend the function $B$ to a function $B_{\rm ext}:U\to\R$ such that 
\begin{equation}\label{eq:ext-B}
{\rm supp}(B_{\rm ext}) \subset U\quad{\rm  and}\quad \|\nabla B_{\rm ext}\|_{L^\infty(U)}\leq C\, \|\nabla B\|_{L^\infty(\Omega)}\,,
\end{equation} 
where $C$ is a constant that depends solely on $\Omega$ and $U$ (i.e. it is independent of $B$).

Define the vector field in $\Omega$
$$\mathbf G(x)=2\left(\int_0^1s B_{\rm ext}\big(s(x-x_0)+x_0\big)\,ds\right)\Ab_0(x-x_0)\,.$$
It is easy to check that
$$\curl\mathbf  G=B_{\rm ext}=B\quad{\rm in ~}\Omega\,.$$ Consequently, since $\Omega$ is simply connected,  there exists a smooth function $\varphi_0$ such that,
$$\Ab(x)=\mathbf G(x)-\nabla\varphi_0(x)\,.$$
 Using \eqref{eq:curl=cst}, \eqref{eq:ext-B} and the mean value theorem, we get further
$$|\mathbf G(x)-B(x_0)\Ab_0(x-x_0)|\leq \frac{C}{ \kappa}|x-x_0|^2\,.$$
Again, using \eqref{eq:curl=cst}, we write $\Big|(B(x_0)-1)\Ab_0(x-x_0)\Big|\leq C\kappa^{-1}|x-x_0|$. This yields the inequality
$$|\mathbf G(x)-\Ab_0(x-x_0)|\leq \frac{C}{ \kappa}\max\Big(|x-x_0|,|x-x_0|^2\Big)\,.$$

\end{proof}

\begin{rem}\label{rem:gauge}
We will use the inequality in Proposition~\ref{prop:gauge} for $|x-x_0|\leq \ell$ and $\ell\ll1$, which in turn reads as follows
$$\Big|\Ab(x)-\big(\Ab_0(x-x_0)-\nabla\varphi_0(x)\big)\Big|\leq \frac{C}{\kappa}\ell\,.$$ 
\end{rem}



\section{On the local energy of minimizers}\label{sec:ub-loc}

For  any open set $D\subset\Omega$, we define the following local energy
\begin{equation}\label{eq:loc-en}
\mathcal E_0(f,a;D)=\int_D\left(|\nabla-i\kappa Ha)f|^2-\kappa^2|f|^2+\frac{\kappa^2}2|f|^4\right)\,dx\,.
\end{equation}
 For $x_0\in\R^2$ and $\ell>0$, $Q_\ell(x_0)=x_0+(-\ell/2,\ell/2)^2$ denotes  the square of center $x_0$ and side-length~$\ell$.

We will need the following result,  essentially proved in \cite{Att} modulo a few adjustments.

\begin{prop}\label{prop:Ka-SIMA}
If $b\in(0,1)$, there exist positive constants $C\,$, $R_0\,$, and $\kappa_0>0\,,$ such that for $\kappa\geq\kappa_0\,$, $H=b\kappa\,$, $R_0\kappa^{-1}\leq \ell\leq \kappa_0^{-1}\,,$
 $x_0\in\Omega\,$, and if $\overline{Q_\ell(x_0)}\subset\Omega\,$, then the following inequalities hold
$$\left|\frac1{|Q_\ell(x_0)|}\mathcal E_0\Big( e^{i\kappa H\varphi_0}\psi,\Ab_0^{x_0};Q_\ell(x_0)\Big)- 
\kappa^2 g(b)\right|\leq C\Big(\ell+(\kappa\ell)^{-1}\Big)\kappa^2\,,$$
and
$$\left|\frac1{|Q_\ell(x_0)|}\int_{Q_\ell(x_0)}|\psi (x) |^4\,dx + 2g(b)\right|\leq C\Big(\ell+(\kappa\ell)^{-1}\Big)\,,$$
where $\Ab_0^{x_0}(x)=\Ab_0(x-x_0)$, $\Ab_0$ is the vector field in \eqref{eq:A0}, and $\varphi_0$ is the function constructed in Proposition~\ref{prop:gauge}\,.
\end{prop}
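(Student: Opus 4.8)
The plan is to compare the rescaled local energy $\mathcal E_0(e^{i\kappa H\varphi_0}\psi,\Ab_0^{x_0};Q_\ell(x_0))$ with the reference energy $e_N(b,r)$ at scale $r=\kappa\ell$, and to show the discrepancy is controlled by $C(\ell+(\kappa\ell)^{-1})\kappa^2$. First I would set $v(y)=e^{i\kappa H\varphi_0(x_0+\kappa^{-1}y)}\psi(x_0+\kappa^{-1}y)$ for $y\in Q_{\kappa\ell}$, and observe that under the substitution $x=x_0+\kappa^{-1}y$ one has $\kappa H=b\kappa^2$, so that $(\nabla_x-i\kappa H\Ab_0^{x_0})$ scales to $\kappa(\nabla_y-ib\kappa^{-1}\cdot\kappa\Ab_0)=\kappa(\nabla_y-ib\Ab_0)$; wait—more carefully, with $H=b\kappa$ we get $\kappa H=b\kappa^2$ and the factor $b$ sits in front of the gradient term exactly as in $F_{b,Q_r}$ after dividing by $\kappa^2$. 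A short computation gives $\kappa^{-2}\mathcal E_0(e^{i\kappa H\varphi_0}\psi,\Ab_0^{x_0};Q_\ell(x_0))=F_{b,Q_{\kappa\ell}}(v)$. Then the upper bound for the energy follows from the variational characterization: $F_{b,Q_{\kappa\ell}}(v)\le\ldots$ is not automatic since $v$ is not a minimizer, so instead I would use \eqref{eq:g}, \eqref{eq:g'} to sandwich $e_N(b,\kappa\ell)/|Q_{\kappa\ell}|$ between $g(b)-C(\kappa\ell)^{-1}$ and $g(b)$, and then I need the two genuinely substantive inequalities: (i) the local energy of the minimizer restricted to $Q_\ell(x_0)$ is \emph{not much larger} than $\kappa^2|Q_\ell(x_0)|g(b)$, and (ii) it is \emph{not much smaller}.

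For the upper bound (i), the standard trick is to build a test configuration for the \emph{global} functional $\mathcal E_{\rm GL}$: take the minimizer $\psi$ away from $Q_\ell(x_0)$ and inside $Q_\ell(x_0)$ replace it by (a cutoff of) a near-minimizer $u_\ell$ of $e_N(b,\kappa\ell)$ suitably rescaled, gauged back via Proposition~\ref{prop:gauge} and Remark~\ref{rem:gauge} so that the magnetic potential error is $O(\kappa^{-1}\ell)$ on $Q_\ell(x_0)$. Comparing $\mathcal E_{\rm GL}$ of this competitor with $\E$ and using \eqref{eq:en} (the Sandier–Serfaty leading order, valid uniformly for $b$ in compacts of $(0,1)$) together with \eqref{eq:m-e} for the magnetic term, the bulk contributions cancel to leading order and one is left with: (local energy of $\psi$ on $Q_\ell(x_0)$) $\le \kappa^2|Q_\ell(x_0)|g(b)+$ (cutoff and gauge errors) $+ o(\kappa^2)|Q_\ell(x_0)|/|\Omega|$—and here lies the delicacy, because a naive $o(\kappa^2)$ global error is useless at the local scale $|Q_\ell(x_0)|=\ell^2$. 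The resolution, which is exactly what \cite{Att} provides, is that \eqref{eq:en} holds with an error of the form $o(\kappa^2)=O(\kappa^{7/4})$-type power savings (cf. \eqref{eq:m-e}) so that after dividing by $|Q_\ell(x_0)|$ with $\ell\gtrsim R_0\kappa^{-1}$ the per-unit-area error is still $o(\kappa^2)$, and a covering/averaging argument upgrades it to the stated $C(\ell+(\kappa\ell)^{-1})\kappa^2$. For the lower bound (ii) one argues conversely: the global energy decomposes (up to boundary-layer terms, which are harmless since $\overline{Q_\ell(x_0)}\subset\Omega$ and we only claim the estimate for interior squares) as a sum over a tiling of $\Omega$ by squares of side $\ell$; each interior tile contributes at least $\kappa^2\ell^2 e_N(b,\kappa\ell)/|Q_{\kappa\ell}|\ge\kappa^2\ell^2(g(b)-C(\kappa\ell)^{-1})$ by the gauge comparison of Proposition~\ref{prop:gauge} (which costs $O(\kappa^{-1}\ell)\cdot(\text{gradient bound }C\kappa)$ in the cross term, i.e. $O(\ell)$ per unit area after squaring—this is where the $C\ell\kappa^2$ term is born), and since the \emph{total} is $\E=g(b)|\Omega|\kappa^2+o(\kappa^2)$, no single interior tile can dip below the average by more than the accumulated error, giving the matching lower bound.

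The $L^4$ statement then follows from the $L^2$-type statement on energy via the identity \eqref{eq:L4=en} applied \emph{locally}. Precisely, multiplying the first Ginzburg–Landau equation in \eqref{eq:GL} by $\overline\psi$ and integrating over $Q_\ell(x_0)$ produces $\mathcal E_0(\psi,\Ab;Q_\ell(x_0))=-\tfrac{\kappa^2}{2}\int_{Q_\ell(x_0)}|\psi|^4\,dx+$ (boundary flux term on $\partial Q_\ell(x_0)$), and the boundary term is $O(\kappa)\cdot O(\kappa\ell^{-1})\cdot\ell$-ish—one bounds it using \eqref{eq:psi<1} and \eqref{eq:grad-psi} by $C\kappa^2\ell$ (a term of the same order as already present). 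Combining with the energy estimate just proved, and noting that $\mathcal E_0$ is gauge invariant so the $e^{i\kappa H\varphi_0}$, $\Ab_0^{x_0}$ decorations do not affect it, divide by $\kappa^2|Q_\ell(x_0)|$ to land the second inequality. The main obstacle, to be explicit, is the bookkeeping of errors at the local scale: one must verify that every error term—cutoff energy of the competitor, the $O(\kappa^{-1}\ell)$ magnetic gauge error interacting with the $O(\kappa)$ gradient, the $o(\kappa^2)$ slack in \eqref{eq:en}, and the $O(\kappa^{7/4})$ magnetic energy \eqref{eq:m-e}—is, \emph{after division by} $\ell^2$ and in the regime $R_0\kappa^{-1}\le\ell\le\kappa_0^{-1}$, dominated by $C(\ell+(\kappa\ell)^{-1})\kappa^2$; this is precisely the ``few adjustments'' to \cite{Att} alluded to, and it is the only nonroutine part—the rest is the energy-displacement method of Sandier–Serfaty together with the gauge fixing of Proposition~\ref{prop:gauge}.
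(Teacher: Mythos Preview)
Your strategy coincides with the paper's: invoke the local energy estimate of \cite{Att} (upper bound via a global competitor, lower bound directly from $e_N$ after rescaling), pass between $\Ab$ and $\Ab_0^{x_0}$ using the gauge of Proposition~\ref{prop:gauge}, and then obtain the $L^4$ statement by multiplying the first Ginzburg--Landau equation by $\overline\psi$ and integrating by parts over $Q_\ell(x_0)$. The paper simply cites \cite[Prop.~4.2 and 6.2]{Att} for the estimate on $\mathcal E_0(\psi,\Ab;Q_\ell(x_0))$, then does the gauge comparison explicitly; you instead sketch the content of \cite{Att}, which is fine.

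Two points to tighten. First, your lower bound discussion is over-engineered: the inequality $\frac1{|Q_R|}F_{b,Q_R}(\tilde f)\geq \frac{e_N(b,R)}{|Q_R|}\geq g(b)-CR^{-1}$ is immediate from the variational definition and \eqref{eq:g'}; no tiling or ``no single tile can dip below the average'' argument is needed there (that device is only relevant for the \emph{upper} bound). Second, your boundary term estimate is off by a factor of~$\kappa$: with $|\psi|\leq 1$ and $|(\nabla-i\kappa H\Ab)\psi|\leq C\kappa$ one gets
\[
\Bigl|\int_{\partial Q_\ell(x_0)}\overline\psi\,\bigl(\nu\cdot(\nabla-i\kappa H\Ab)\psi\bigr)\,d\sigma\Bigr|\leq |\partial Q_\ell(x_0)|\cdot 1\cdot C\kappa= C\kappa\ell,
\]
not $C\kappa^2\ell$. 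This matters: after dividing by $\kappa^2|Q_\ell(x_0)|=\kappa^2\ell^2$, your bound would give $C\ell^{-1}$, which is \emph{not} dominated by $C(\ell+(\kappa\ell)^{-1})$, whereas the correct $C\kappa\ell$ yields $C(\kappa\ell)^{-1}$, exactly the term claimed. With this correction your argument closes.
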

\begin{proof}
In \cite[Prop.~4.2 and 6.2]{Att}, it is proved that
\begin{equation}\label{eq:loc-en-Att*}
\left|\frac1{|Q_\ell(x_0)|}\mathcal E_0\Big(\psi,\Ab;Q_\ell(x_0)\Big)- 
\kappa^2 g(b)\right|\leq C\Big(\ell+(\ell\kappa)^{-1}\Big)\kappa^2\,.
\end{equation}
The estimate of the remainder term in \cite{Att} was worse because  the magnetic field was assumed non-constant
and a variant of the inequality in Proposition~\ref{prop:gauge} was used (with a worse error as well). However, in our case of a constant magnetic field, we insert the inequality in Proposition~\ref{prop:gauge} into the proof given in \cite{Att} and get the better remainder
as in \eqref{eq:loc-en-Att*}.

 We write
\begin{align*}
\mathcal E_0\Big(\psi,\Ab;Q_\ell(x_0)\Big)&=
\mathcal E_0\Big(\psi,\Ab_0^{x_0}-\nabla\varphi_0+(\Ab-\Ab_0^{x_0}+\nabla \varphi_0);Q_\ell(x_0)\Big)\\
&\geq (1-\ell)\, \mathcal E_0\Big(\psi,\Ab_0^{x_0}-\nabla\varphi_0;Q_\ell(x_0)\Big)\\
&\quad-\ell^{-1}\kappa^2H^2\int_{Q_\ell(x_0)}|\Ab-\Ab_0^{x_0}+\nabla \varphi_0|^2|\psi|^2\,dx-\ell\kappa^2\int_{Q_\ell(x_0)}|\psi|^2\,dx\,.
\end{align*}  
Using the gauge invariance, the bound $|\psi|\leq 1$ and the inequality in Proposition~\ref{prop:gauge}\,, we get the following lower bound
$$\mathcal E_0\Big(\psi,\Ab;Q_\ell(x_0)\Big)\geq (1-\ell)\, \mathcal E_0\Big(e^{-\kappa H\varphi_0}\psi,\Ab_0^{x_0};Q_\ell(x_0)\Big)-C\kappa^2\ell^3\,.
$$
In a similar fashion, we prove the upper bound
$$\mathcal E_0\Big(\psi,\Ab;Q_\ell(x_0)\Big)\leq (1+\ell)\, \mathcal E_0\Big(e^{-\kappa H\varphi_0}\psi,\Ab_0^{x_0};Q_\ell(x_0)\Big)+C\kappa^2\ell^3\,.$$
Inserting the foregoing lower and upper bounds into \eqref{eq:loc-en-Att*}, we get the first inequality in Proposition~\ref{prop:Ka-SIMA}.

Now we prove the second inequality in Proposition~\ref{prop:Ka-SIMA}. We multiply the first G-L equation in \eqref{eq:GL} by $\overline\psi$ and integrate by parts in the integral over $Q_\ell(x_0)$. We get
$$-\frac{\kappa^2}2\int_{Q_\ell(x_0)}|\psi(x)|^4\,dx=\mathcal E_0\Big(\psi,\Ab;Q_\ell(x_0)\Big)
+\int_{\partial Q_\ell(x_0)}\, \overline{\psi}\;(\nu \cdot (\nabla-i\kappa H\Ab)\psi)\,d\sigma(x)\,.
$$
Using the bounds $|\psi|\leq 1$ and $|(\nabla-i\kappa H\Ab)\psi|\leq C\kappa$ in Proposition~\ref{prop:FH-b}, we get that the boundary term is bounded by  $\tilde C\kappa\ell$, where $\tilde C$ is a constant.

Now, using \eqref{eq:loc-en-Att*}, we get 
$$\left|-\frac{\kappa^2}2\int_{Q_\ell(x_0)}|\psi(x)|^4\,dx-g(b)\kappa^2|Q_\ell(x_0)|\right|\leq 
C\,\Big(\ell+(\kappa\ell)^{-1}\Big)\kappa^2|Q_\ell(x_0)|\,.$$
\end{proof}


\section{Proof of Theorem~\ref{thm:HK}}\label{sec:proof}

Our proof of Theorem~\ref{thm:HK} has some similarities with the analysis of diamagnetism \cite{FH-aif} and the computation of the quantum supercurrent  \cite{F-sc}.

For the proof of Theorem~\ref{thm:HK}, it is easier to work with rescaled  variables.

\begin{defn}\label{def:conv}

Let $x_0\in\Omega$, $\ell>0$ and $f\in H^1(\Omega)$ and suppose that
$Q_{\ell}(x_0)\subset\Omega\,.$
We define the new function $\tilde f$ on $Q_{\ell\sqrt{\kappa H}}:=Q_{\ell\sqrt{\kappa H}}(0)$ as follows:
$$\tilde f(y)=f\left(x_0+\frac{y}{\sqrt{\kappa H}}\right)\,.$$
\end{defn}

For $H=b\kappa$ and $R=\ell\sqrt{\kappa H}$, we have the following relation:
\begin{equation}\label{rem:blow-up}
\frac1{\kappa^2|Q_\ell(x_0)|}\mathcal E_0\big(f,\Ab_0^{x_0};Q_\ell(x_0)\big)=\frac{1}{|Q_R|}\int_{Q_R}
\left(b|(\nabla-i\Ab_0)\tilde f|^2-|\tilde f|^2+\frac12|\tilde f|^4\right)\,dy\,.
\end{equation}

\begin{lem}\label{lem:mag-grad}
For $b\in(0,1)$, there exist $\kappa_0,R_0>0$ and a positive-valued function $\mathrm r(\cdot,\cdot)$ such that $\lim_{(t^{-1},s)\to0}\mathrm r(t,s)=0$ and the inequality 
$$g'(b_+) -\mathrm r(R,\ell)\leq \frac1{|Q_R|}\int_{Q_R}|(\nabla-i\Ab_0) \tilde f|^2\,dy
\leq g'(b_-)+ \mathrm r(R,\ell)\,, $$
holds for (cf. Prop.~\ref{prop:Ka-SIMA})
$$f(x)=e^{i\kappa H\varphi_0}\psi(x)\,,$$
$R=\ell\sqrt{\kappa H}$, $R_0\kappa^{-1}<\ell<\kappa_0^{-1}$, $\kappa\geq \kappa_0$, $H=b\kappa$ and $(\psi,\Ab)_{\kappa,H}$ is a minimizer of  the functional in \eqref{eq-3D-GLf}.
\end{lem}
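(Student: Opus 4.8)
The plan is to extract the gradient bound from a Legendre-transform / convexity argument applied to the bulk energy $g$. The key idea is that if we perturb the magnetic-field strength $b$ slightly to $b+t$, the rescaled local functional in \eqref{rem:blow-up} changes only in the coefficient of the magnetic gradient term, so differentiating (formally) in $t$ should produce $\int |(\nabla-i\Ab_0)\tilde f|^2$, and the derivative of $g$ controls this. Concretely, I would first record from Proposition~\ref{prop:Ka-SIMA} (restated via \eqref{rem:blow-up}) that for $f=e^{i\kappa H\varphi_0}\psi$, $R=\ell\sqrt{\kappa H}$, one has
\begin{equation*}
\left|\frac{1}{|Q_R|}\int_{Q_R}\Big(b|(\nabla-i\Ab_0)\tilde f|^2-|\tilde f|^2+\tfrac12|\tilde f|^4\Big)\,dy - g(b)\right|\leq C\big(\ell+(\kappa\ell)^{-1}\big)\,,
\end{equation*}
and similarly $\big|\frac{1}{|Q_R|}\int_{Q_R}|\tilde f|^4\,dy+2g(b)\big|\leq C(\ell+(\kappa\ell)^{-1})$. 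Write $D:=\frac1{|Q_R|}\int_{Q_R}|(\nabla-i\Ab_0)\tilde f|^2\,dy$ and $M:=\frac1{|Q_R|}\int_{Q_R}\big(-|\tilde f|^2+\tfrac12|\tilde f|^4\big)\,dy$, so $bD+M=g(b)+O(\ell+(\kappa\ell)^{-1})$.

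For the upper bound on $D$: for any small $t>0$ with $b+t<1$, the same test function $\tilde f$ is admissible for the functional at parameter $b+t$ on the square $Q_R$; since $R=\ell\sqrt{\kappa H}$ is large, Proposition~\ref{prop:Ka-SIMA}-type lower bounds (or directly \eqref{eq:g'} together with the fact that $e_N/|Q_R|\geq g(b+t)-C R^{-1}\sqrt{b+t}$ after rescaling back) give $(b+t)D+M\geq g(b+t)-C(\ell+(\kappa\ell)^{-1})$. Subtracting the equality for $b$ yields $tD\geq g(b+t)-g(b)-C(\ell+(\kappa\ell)^{-1})$, hence
\begin{equation*}
D\geq \frac{g(b+t)-g(b)}{t}-\frac{C(\ell+(\kappa\ell)^{-1})}{t}\,.
\end{equation*}
Letting $t\to0_+$ \emph{after} sending $(R^{-1},\ell)\to0$ — i.e.\ choosing $t=t(\ell,\kappa)$ decaying slowly, such as $t=(\ell+(\kappa\ell)^{-1})^{1/2}$ — makes the error $o(1)$ and the difference quotient tends to $g'(b_+)$ by concavity. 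This gives $D\geq g'(b_+)-\mathrm r(R,\ell)$. For the reverse inequality one repeats the argument with $t<0$ (parameter $b+t$, still in $(0,1)$ for $|t|$ small), obtaining $tD\geq g(b+t)-g(b)-C(\ell+(\kappa\ell)^{-1})$, and now dividing by the negative $t$ flips the inequality to $D\leq \frac{g(b+t)-g(b)}{t}+\frac{C(\ell+(\kappa\ell)^{-1})}{|t|}\to g'(b_-)$. Collecting both bounds and absorbing all error terms into a single function $\mathrm r(R,\ell)$ with $\mathrm r\to0$ as $(R^{-1},\ell)\to0$ completes the proof.

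The main obstacle is making the comparison $(b+t)D+M\geq g(b+t)-(\text{error})$ rigorous with an error uniform in $t$ over a small interval around $b$: one needs the local lower bound of Proposition~\ref{prop:Ka-SIMA} (or its ingredients from \cite{Att}) to hold with constants that do not degenerate as $b$ varies in a fixed compact subinterval of $(0,1)$, which is exactly the uniformity discussed after \eqref{eq:en} and \eqref{eq:m-e}. A secondary technical point is bookkeeping the passage from the lower bound for $e_N(b+t,\cdot)$ on the \emph{fixed} square $Q_R$ back to $g(b+t)$ via \eqref{eq:g'}, and checking that the test configuration $\tilde f$ (coming from the minimizer at parameter $b$) genuinely lies in $H^1(Q_R)$ with no boundary constraint, so that the Neumann ground state $e_N$ is the relevant comparison object. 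Once the uniform error is secured, the rest is the elementary convexity computation above, with the choice of $t$ tending to zero slower than the error in \eqref{eq:loc-en-Att*} driving everything.
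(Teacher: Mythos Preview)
Your proposal is correct and takes essentially the same approach as the paper: perturb the parameter $b$ to $b+\epsilon$, observe that $F_{b+\epsilon,Q_R}(\tilde f)-F_{b,Q_R}(\tilde f)=\epsilon\int_{Q_R}|(\nabla-i\Ab_0)\tilde f|^2$, bound $F_{b+\epsilon,Q_R}(\tilde f)$ from below by $e_N(b+\epsilon,R)\geq g(b+\epsilon)|Q_R|-CR$ via \eqref{eq:g'} and $F_{b,Q_R}(\tilde f)$ from above by Proposition~\ref{prop:Ka-SIMA}, then choose $\epsilon=\pm\max(R^{-1/2},\ell^{1/2})$ (equivalent to your $t=(\ell+(\kappa\ell)^{-1})^{1/2}$ after normalizing by $|Q_R|$). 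The uniformity concern you flag is handled exactly as you say, and the $L^4$ estimate you record is not actually needed here (it enters only in the next lemma).
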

\begin{proof}
Recall the definition of the function $F_{b,Q_R}$ in \eqref{eq:rGL}. By \eqref{rem:blow-up} and Proposition~\ref{prop:Ka-SIMA}, 
$$F_{b,Q_R}(\tilde f)\leq g(b)|Q_R| +C\Big(R+\ell R^2\Big).$$
Let $\epsilon\in\R\setminus\{0\}$ such that $b+\epsilon\in(0,1)$. Using \eqref{eq:g'}, we get
$$F_{b+\epsilon,R}(\tilde f)\geq e_N(b+\epsilon,R)\geq g(b+\epsilon)|Q_R|-CR\,.$$
It is easy to notice that
\begin{equation}\label{eq:g(b+e)}
\begin{aligned}
\epsilon\int_{Q_R}|(\nabla-i\Ab_0) \tilde f|^2\,dy&=F_{b+\epsilon,R}(\tilde f)-F_{b,R}(\tilde f)\\
&\geq \Big(g(b+\epsilon)-g(b)\Big)|Q_R|-C\Big(R+\ell R^2\Big)\,.
\end{aligned}
\end{equation}
For $\epsilon>0$, we infer from  \eqref{eq:g(b+e)} the lower bound
$$\int_{Q_R}|(\nabla-i\Ab_0) \tilde f|^2\,dy\geq \frac{g(b+\epsilon)-g(b)}\epsilon |Q_R|-C\epsilon^{-1}\Big(R+\ell R^2\Big)\,.$$
Choosing $\epsilon=\max\Big(R^{-1/2},\ell^{1/2}\Big)$, we get further
$$\int_{Q_R}|(\nabla-i\Ab_0) \tilde f|^2\,dy\geq g'(b_+)|Q_R|-\mathrm r_1(R,\ell) |Q_R| \,,$$
where
$$\mathrm r_1(R,\ell)=C\Big(R^{-1/2}+\ell^{1/2}\Big)+\left|\frac{g(b+\epsilon)-g(b)}{\epsilon}-g'(b_+)\right|\to0~{\rm as~}~(R^{-1},\ell)\to 0\,.$$
In a similar fashion,  we choose $\epsilon=-\max\Big(R^{-1/2},\ell^{1/2}\Big)<0$ and infer from \eqref{eq:g(b+e)} the upper bound
$$ \int_{Q_R}|(\nabla-i\Ab_0) \tilde f|^2\,dy\leq g'(b_-)|Q_R|+\mathrm r_2(R)  |Q_R|\,,$$
where
$$ \mathrm r_2(R,\ell)=C\Big(R^{-1/2}+\ell^{1/2}\Big)+\left|\frac{g(b+\epsilon)-g(b)}{\epsilon}-g'(b_-)\right|
\to 0\quad~{\rm as~}~(R^{-1},\ell)\to 0\,.$$
To conclude, we choose $\mathrm r(R)=\max\Big(\mathrm r_1(R,\ell),\mathrm r_2(R,\ell)\Big)$.

\end{proof}

\begin{lem}\label{lem:L2}
There exists a function $\tilde{\mathrm r}(\cdot,\cdot)$ such that   $\lim_{(t^{-1},s)\to0}\tilde{\mathrm r}(t,s)=0$ and, under the assumptions in Lemma~\ref{lem:mag-grad}, the following inequality holds
$$b g'(b_+)-2g(b)-\tilde{\mathrm r}(R,\ell)\leq \frac1{|Q_R|}\int_{Q_R}|\tilde f(y)|^2\,dy\leq bg'(b_-)-2g(b)+\tilde{\mathrm r}(R,\ell)\,.$$
\end{lem}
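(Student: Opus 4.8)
The plan is to derive the $L^2$-bound on $\tilde f$ from the already-established $L^2$-bound on $(\nabla - i\Ab_0)\tilde f$ in Lemma~\ref{lem:mag-grad}, combined with the local energy identity of Proposition~\ref{prop:Ka-SIMA}. The key observation is that, after rescaling, the energy functional $F_{b,Q_R}(\tilde f)$ splits as
$$
F_{b,Q_R}(\tilde f) = b\int_{Q_R}|(\nabla-i\Ab_0)\tilde f|^2\,dy - \int_{Q_R}|\tilde f|^2\,dy + \frac12\int_{Q_R}|\tilde f|^4\,dy\,,
$$
so that the $L^2$-norm of $\tilde f$ is an exact linear combination of the kinetic energy, the quartic term, and the total energy. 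Proposition~\ref{prop:Ka-SIMA} (via \eqref{rem:blow-up}) tells us that $\frac1{|Q_R|}F_{b,Q_R}(\tilde f) = g(b) + O(R^{-1}+\ell)$ and that $\frac1{|Q_R|}\int_{Q_R}|\tilde f|^4\,dy = -2g(b) + O(R^{-1}+\ell)$, while Lemma~\ref{lem:mag-grad} pins the kinetic term between $g'(b_+)$ and $g'(b_-)$ up to $\mathrm r(R,\ell)$.

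First I would write the exact identity
$$
\int_{Q_R}|\tilde f|^2\,dy = b\int_{Q_R}|(\nabla-i\Ab_0)\tilde f|^2\,dy + \frac12\int_{Q_R}|\tilde f|^4\,dy - F_{b,Q_R}(\tilde f)\,.
$$
Then I would substitute the three estimates. The quartic and energy terms contribute $\frac12(-2g(b)) - g(b) = -2g(b)$ to the leading coefficient (per unit area), with an error controlled by $C(R^{-1}+\ell)$. For the kinetic term, the upper estimate $b\cdot g'(b_-)$ combined with all errors gives the upper bound $\frac1{|Q_R|}\int_{Q_R}|\tilde f|^2 \le b\,g'(b_-) - 2g(b) + \tilde{\mathrm r}(R,\ell)$; the lower estimate $b\,g'(b_+)$ gives the matching lower bound. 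Wait --- I must double-check the coefficient: the statement of Lemma~\ref{lem:L2} claims the bound is $g'(b_\pm) - 2g(b)$, not $b\,g'(b_\pm) - 2g(b)$. Indeed the correct normalization in Lemma~\ref{lem:mag-grad} already absorbs the factor $b$ into how the endpoints $g'(b_\pm)$ are expressed via the chain-rule computation in \eqref{eq:g(b+e)} (there $\epsilon\int|(\nabla-i\Ab_0)\tilde f|^2 = F_{b+\epsilon} - F_b$, so the kinetic norm equals the $b$-derivative of the energy, i.e. $g'$ directly, with no extra $b$). So the identity I should use is rather
$$
\int_{Q_R}|\tilde f|^2\,dy = \int_{Q_R}|(\nabla-i\Ab_0)\tilde f|^2\,dy \cdot b + \tfrac12\int_{Q_R}|\tilde f|^4 - F_{b,Q_R}(\tilde f)\,,
$$
and I need to reconcile $b\,g'(b_\pm)$ with $g'(b_\pm)$; the resolution is that $\frac1{|Q_R|}\int|(\nabla-i\Ab_0)\tilde f|^2$ is bounded by $g'(b_\pm)/b$ after correctly tracking the $b$ in \eqref{eq:g(b+e)} --- actually Lemma~\ref{lem:mag-grad} as literally stated bounds it by $g'(b_\pm)$, so I will take that at face value and note that the leading term then comes out as $b\,g'(b_\pm) - 2g(b)$; since $b\in(0,1)$ one should present the bound exactly as the arithmetic dictates. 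I would therefore state the conclusion with whichever coefficient is forced by the cited lemmas, and set $\tilde{\mathrm r}(R,\ell) = \mathrm r(R,\ell) + C(R^{-1}+\ell)$ (times appropriate constants).

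The main obstacle is purely bookkeeping: making sure the $b$-factors, the factor $\tfrac12$ on the quartic term, and the sign conventions line up so that the advertised leading coefficient $g'(b_\pm)-2g(b)$ actually emerges, and that the error function $\tilde{\mathrm r}$ is a genuine function of $(R,\ell)$ tending to $0$ as $(R^{-1},\ell)\to 0$, uniformly in the choice of minimizer and of $x_0$. There is no analytic difficulty beyond what is already packaged in Propositions~\ref{prop:Ka-SIMA} and Lemma~\ref{lem:mag-grad}; the proof is a two-line substitution into the energy-splitting identity followed by collecting error terms. I would close by defining $\tilde{\mathrm r}(t,s) = \mathrm r(t,s) + C(t^{-1}+s)$ and remarking that $\lim_{(t^{-1},s)\to 0}\tilde{\mathrm r}(t,s)=0$, which completes the proof.
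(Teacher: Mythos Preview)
Your approach is exactly the paper's: write the identity
\[
\int_{Q_R}|\tilde f|^2\,dy \;=\; b\int_{Q_R}|(\nabla-i\Ab_0)\tilde f|^2\,dy \;+\;\tfrac12\int_{Q_R}|\tilde f|^4\,dy \;-\; F_{b,Q_R}(\tilde f)\,,
\]
estimate the energy term and the quartic term by Proposition~\ref{prop:Ka-SIMA} (via \eqref{rem:blow-up}), estimate the kinetic term by Lemma~\ref{lem:mag-grad}, and absorb all errors into $\tilde{\mathrm r}(R,\ell):=b\,\mathrm r(R,\ell)+C(\ell+R^{-1})$. The paper's three-line proof does precisely this.

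Your hesitation about the coefficient is justified, and you should not have backed off from it. Lemma~\ref{lem:mag-grad} bounds the \emph{bare} kinetic average by $g'(b_\pm)$ (its proof uses $\partial_b F_{b,Q_R}(\tilde f)=\int|(\nabla-i\Ab_0)\tilde f|^2$, with no extra $1/b$), so the identity above genuinely yields
\[
\frac{1}{|Q_R|}\int_{Q_R}|\tilde f|^2\,dy \;\approx\; b\,g'(b_\pm)-2g(b)\,.
\]
The printed statement of Lemma~\ref{lem:L2} is off in two ways: it omits the $-2g(b)$ (compare Theorem~\ref{thm:HK*}, which is said to follow from it ``by rescaling'' even though rescaling leaves $|Q_R|^{-1}\int|\tilde f|^2$ unchanged), and it drops the factor $b$ in front of $g'(b_\pm)$, a slip that then propagates to Theorems~\ref{thm:HK} and~\ref{thm:HK*}. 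Your substitution is the correct argument; carry it through with the leading term $b\,g'(b_\pm)-2g(b)$.
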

\begin{proof}
By \eqref{rem:blow-up} and Proposition~\ref{prop:Ka-SIMA}, 
$$\Big|F_{b,Q_R}(\tilde f)-g(b)|Q_R| \Big|\leq  CR^{3/2}\,.$$
By the  formula for the $L^4$-norm of $\psi$ in Proposition~\ref{prop:Ka-SIMA} and a change of variables, we have
$$\Big|\int_{Q_R}|\tilde f (y)|^4\,dy+2g(b)|Q_R|\Big|\leq C\big( \ell+R^{-1} \big)|Q_R|\,.$$
Combining the aforementioned formulae and the one in Lemma~\ref{lem:mag-grad}, we get the formula for the integral of $|\tilde f|^2$.
\end{proof}

By rescaling, we deduce  from Lemma~\ref{lem:L2}:
\begin{thm}\label{thm:HK*}
Let $b\in(0,1)$. There exist $C,R_0,\kappa_0>0$ and a positive-valued function $\lambda(\cdot)$ such that
$\displaystyle\lim_{\kappa\to + \infty}\lambda(\kappa)=0$ and the following is true.

Suppose that
\begin{itemize}
\item $\kappa\geq\kappa_0$ and $H=b\kappa$\,;
\item $R_0\kappa^{-1}\leq \ell\leq \kappa_0^{-1}$\,;
\item $Q_\ell$  is the interior of a square of side length $\ell$ satisfying $\overline{Q_\ell}\subset\Omega$\,;
\item $(\psi,\Ab)_{\kappa,H}$ is a minimizer of the functional in \eqref{eq-3D-GLf}\,.
\end{itemize}
Then the following inequalities hold
$$bg'(b_+)-2g(b)-\lambda(\kappa)\leq \frac{1}{|Q_\ell|}\int_{Q_\ell}|\psi (x) |^2\,dx\leq bg'(b_-)-2g(b)+\lambda(\kappa)\,.$$
\end{thm}
Theorem~\ref{thm:HK*} improves the results in \cite{SS02}, where only  a non-optimal upper bound on the integral of $|\psi|^2$ is given (see \cite[Eq.~(1.18)]{SS02}). In Theorem~\ref{thm:HK*}, we not only  prove a lower bound on the integral of $|\psi|^2$, but also a matching upper bound in the case where $b\in\mathcal R$ (i.e. when $g'(b_+)=g'(b_-)$). 

~
\begin{proof}[Proof of Theorem~\ref{thm:HK}]~
The proof of the statements (2) and (3) regarding the estimate of the $L^2$-norm of $\psi$ and the weak convergence of $|\psi|^2$ both follow from Theorem~\ref{thm:HK*} in a standard manner, see e.g. \cite[Proof of Thm.~4.1]{Att}.

Now, the proof of statement (1) regarding the $L^2$-norm of the magnetic gradient is a consequence of statement (1) and the formulas in \eqref{eq:L4=en} and \eqref{eq:L4}.

The first inequality in statement (4) regarding the supercurrent results from statement (1) and the following inequality
$$|j(\psi,\Ab)|\leq |(\nabla-i\kappa H\Ab)\psi|\,,$$
which is a consequence of the definition of the supercurrent in \eqref{eq:sc} and the inequality in \eqref{eq:psi<1}.  

The 
other inequality for the $L^1$-norm of the supercurrent results from the inequality $$|j(\psi,\Ab)|\leq |\psi|  \,  |(\nabla-i\kappa H\Ab)\psi|\,,$$
the Cauchy-Schwarz inequality and the conclusions in Statements~(1) and (2).
\end{proof}

\section{New properties of the function $g$}\label{sec:g(b)}

\subsection{Universal estimates of $g(b)$}

As a by-product of the result in Theorem~\ref{thm:HK}, we get new properties of the function $g(\cdot)$ introduced in \eqref{eq:g}.


Using the classical bound $|\psi|\leq 1$ (see \eqref{eq:psi<1}), we deduce from \eqref{eq:L2} that
\begin{equation}\label{eq:L2a}
\forall~b\in(0,1)\,,\quad bg'(b_+)-2g(b)\leq 1\,.
\end{equation}
We can obtain an upper bound on the left-derivative of $g$ as well by expanding  the square in the inequality $\displaystyle\int_\Omega(1-|\psi(x)|^2)^2\,dx\geq0$ then using \eqref{eq:L4} and \eqref{eq:L2}:
\begin{equation}\label{eq:ub-g'-}
\forall~b\in(0,1)\,,\quad bg'(b_-)\leq \frac12+g(b)\,.
\end{equation}
Note that \eqref{eq:ub-g'-} is better than \eqref{eq:L2a} since $g'(b_+)\leq g'(b_-)\,,$ $g(b)\geq -\frac12$, hence $\frac12+g(b)\leq 1+2g(b)\,$.

\subsection{On the behavior of $g(b)$ as $b\to0_+$} 
Taking the limit as $b\to0_+$ in \eqref{eq:L2a} and noticing that $g'(b_\pm)\geq0$ and $g(0)=-\frac12\,$, we get
$$\lim_{b \to 0_+}g'(b_\pm)=0\,.$$
Consequently, there exists a sequence $(b_n)_{n\geq 1}\subset \mathcal R$ such that $b_n\to 0$ and $g'(b_n)\to0$ ($\mathcal R$ is defined in \eqref{eq:g'}).
On the other hand, it is proved in \cite{Ka-JFA} that as $b\to0_+$, 
\begin{equation}\label{eq:g(b)=}
g(b)=-\frac12+\frac{b}4\ln\frac1b+o\left(b\ln\frac1b\right)\,.\end{equation}
We deduce from this that:
\begin{itemize}
\item $g'(0_+)=+\infty$\,;
\item the function $b\mapsto g'(b_+)$ is not continuous at $0$\,;
\item The asymptotics in \eqref{eq:g(b)=} can not be differentiated,  i.e. the formula
$$g'(b)\sim\frac14\ln\frac1b-\frac14$$
does not hold as $b\underset{b\in\mathcal R}{\longrightarrow} 0_+$.   The aforementioned  sequence $(b_n)$ may violate this formula.
\end{itemize}

\subsection{The radial symmetry}
Next we try to extract more information about the function $g$ by exploiting the radial symmetry. The function $g$ may be expressed as follows
\begin{equation}\label{eq:g-disc}
\forall~b\in(0,1)\,,\quad g(b)=\lim_{R\to\infty} \frac{\mathfrak e_{\rm disc}(b,R)}{\pi R^2}\,,
\end{equation}
where
\begin{equation}\label{eq:en-disc}
\mathfrak e_{\rm disc}(b,R)=\inf\{ F_{b,D_R}(u)~:~u\in H^1_0(D_R)\}\,,
\end{equation}
$D_R=\{x\in\R^2~:~|x|<R\}$ and $F_{b,D_R}$ is the functional introduced in \eqref{eq:rGL}. The proof of \eqref{eq:g-disc} is standard (see \cite{AfSe, FK-cpde}). It follows by covering the disc $D(0,R)$ with squares $(Q_{R',j})_j$ with side-length $1\ll R'\ll R$ and using the estimates in \eqref{eq:g'} (for $r=R'$). We omit the technical details.

We restrict the functional $F_{b,D_R}(u)$ on configurations of the form
\begin{equation}\label{eq:fpures}
u(r,\theta)=e^{im\theta} f(r)\,,
\end{equation}
where $f:(0,R)\to\C$, $m\in\mathbb Z$ and $(r,\theta)$ denote the polar coordinates. 

Note that $u\in H^1_0((B(0,R))$ if and only if
$f\in\mathcal D_{m,R}$, where
\begin{equation}\label{eq:D-polar}
\mathcal D_{m,R}=\Big\{f~:~\sqrt{r}\,f',\sqrt{r}\,f,\frac{m}{\sqrt{r}}\,f\in L^2\big((0,R);\R\big)\,,~f(R)=0\Big\}\,.
\end{equation} 
Furthermore,
$$F_{b,D_R}(u)=G_{m,b,R}(f)\,,$$
where
\begin{equation}\label{eq:G-polar}
G_{m,b,R}(f)=2\pi\int_0^R \left(b|f'(r)|^2+b\Big(\frac{m}{r}-\frac{r}2\Big)^2|f(r)|^2-|f(r)|^2+\frac12|f(r)|^4\right)rdr\,.
\end{equation}
Consequently, we define the following ground state energy
\begin{equation}\label{eq:G-polar*}
\mathfrak e^{\rm 1D}(m,b,R)=\inf\{G_{b,m,R}(f)~:~f\in\mathcal D_{m,R}\}\,.
\end{equation}
A minimizer $f_{m,b,R}$ exists, can be selected real-valued and non-negative (because $|f_{m,b,R}|$ is a minimizer too) and  satisfies the following ODE 
\begin{equation}\label{eq:G-polar***}
-f''_{m,b,R}(r)-\frac1rf'(r)+\Big(\frac{m}{r}-\frac{r}2\Big)^2f_{m,b,R}(r)=\frac1b\big(1-|f_{m,b,R}(r)|^2\big)f_{m,b,R}(r)\quad{\rm in~}(0,R)\,.\end{equation}
 
 
When the magnetic field is absent (i.e. the term $\frac r2$ is dropped from \eqref{eq:G-polar***}) and  $R=+\infty\,$, \eqref{eq:G-polar***} has been studied in many papers, for example \cite{HH}.

Now we define
\begin{equation}\label{eq:gm(b)}
g_m(b)=\limsup_{R\to + \infty} \frac{\mathfrak e^{\rm 1D}(m,b,R)}{\pi R^2 }\,.
\end{equation}
We then have, 
\begin{equation}\label{eq:g<gm}
\forall~b\in(0,1)\,,\quad\forall~m\in\mathbb Z\,,\quad g(b)\leq g_m(b)\,.
\end{equation}

\begin{rem}
 A natural question is then to determine if for any $b\in (0,1)$ there exists $m\in \mathbb Z$ such that $g(b)=g_m(b)$ and if the discontinuity of $g'$ corresponds to the case when two $m$'s satisfy this property.
\end{rem}

\section{Extension to three dimensional domains}

The result in Theorem~\ref{thm:HK} can be easily extended to the three dimensional Ginzburg-Landau model. 
In this section,  $\Omega\subset\R^3$ denotes a bounded  smooth open  set with a smooth boundary. We introduce   the Ginzburg-Landau functional in $\Omega$ as follows \cite{FH-b, LP-3D},
\begin{multline}\label{eq-3D-GLf*}
\mathcal E^{\rm 3D}(\psi,\Ab)=\mathcal
E_{\kappa,H}^{\rm 3D}(\psi,\Ab)=
\int_\Omega\left[
|(\nabla-i\kappa
H\Ab)\psi|^2-\kappa^2|\psi|^2+\frac{\kappa^2}{2}|\psi|^4\right]\,dx\\
+\kappa^2H^2\int_{\R^3}|\curl\Ab-\beta|^2\,dx\,,
\end{multline}
where $\beta=(0,0,1)$. \\
The  configuration $(\psi,\Ab)$ belongs to the space $H^1(\Omega;\C)\times 
\dot{H}^1_{\Div,\Fb}(\R^3)$ with 
$\dot H^1_{\Div,\Fb}(\R^3)$  defined as follows. Let  $\dot H^1(\R^3)$ be the homogeneous Sobolev space, i.e. the closure
of $C_c^\infty(\R^3)$ under the norm $u\mapsto\|u\|_{\dot
  H^1(\R^3)}:=\|\nabla u\|_{L^2(\R^3)}$. Let further
  $\Fb(x)=(-x_2/2,x_1/2,0)$. Clearly $\Div \Fb=0$.
We define the space,
\begin{equation}\label{eq-3D-hs*}
\dot H^1_{\Div,\Fb}(\R^3)=\{\Ab~:~\Div \Ab=0\,,\quad~{\rm and}\quad
\Ab-\Fb\in \dot H^1(\R^3)\}\,.
\end{equation}
Now we define  the ground state energy,
\begin{equation}\label{eq-3D-gs*}
\E(\kappa,H)=\inf\big\{
\mathcal E^{\rm 3D}(\psi,\Ab)~:~(\psi,\Ab)\in H^1(\Omega;\C)\times \dot H^1_{\Div,\Fb}(\R^3)\big\}\,.
\end{equation}
This energy is estimated in \cite{FK-cpde} when $H=b\kappa$, $b\in(0,1)$ is a fixed constant and $\kappa\to\infty$. Using the methods in \cite{FK-cpde}, we may easily adapt the proof of Theorems~\ref{thm:HK} and \ref{thm:HK*} to get the following result:

\begin{thm}\label{thm:HK**}
For $b\in(0,1)$, there exist $C,R_0,\kappa_0>0$ and a positive-valued function $\lambda(\cdot)$ such that
$\displaystyle\lim_{\kappa\to + \infty}\lambda(\kappa)=0$ and the following is true.

Suppose that
\begin{itemize}
\item $\kappa\geq\kappa_0$ and $H=b\kappa$\,;
\item $R_0\kappa^{-1}\leq \ell\leq \kappa_0^{-1}$\,;
\item $Q_\ell$  is the interior of a cube of side length $\ell$ satisfying $\overline{Q_\ell}\subset\Omega$\,;
\item $(\psi,\Ab)_{\kappa,H}$ is a minimizer of the functional in \eqref{eq-3D-GLf*}\,.
\end{itemize}
Then the following inequalities hold
$$bg'(b_+)-2g(b)-\lambda(\kappa)\leq \frac{1}{|Q_\ell|}\int_{Q_\ell}|\psi|^2\,dx\leq bg'(b_-)-2g(b)+\lambda(\kappa)\,.$$
\end{thm}

As a consequence of Theorem~\ref{thm:HK**}, we can get that the minimizer $(\psi,\Ab)_{\kappa,H}$ satisfies the following weak convergence for $H=b\kappa$, $b\in\mathcal R$  and $\kappa\to\infty$\,:
$$|\psi|^2\to bg'(b)-2g(b)   \quad{\rm in~}\mathcal D'(\Omega)\,.$$

This result is complementary to the results in \cite{FKP-3D} and \cite{KN} devoted respectively to the regimes $b>1$ (surface superconductivity) and $b\to1_-$ (bulk superconductivity near $H_{C_2}$) for three dimensional superconducting samples.

\subsection*{Acknowledgments}
AK is supported by a grant from Lebanese university in the framework of `\'Equipe de Mod\'elisation, Analyse et Applications'. 


\end{document}